\newcommand\CA{{\mathscr A}} 
\newcommand\CB{{\mathscr B}}
\newcommand\CO{{\mathcal O}}
\newcommand\BBC{{\mathbb C}}
\newcommand\BBP{{\mathbb P}}
\newcommand\GL{\operatorname{GL}}
\renewcommand\th{{^{\text{th}}}}
\numberwithin{equation}{section}
\theoremstyle{plain}
\newtheorem{theorem}{Theorem}
\newtheorem{lemma}[equation]{Lemma}
\newtheorem*{Hope}{Hope}
\theoremstyle{definition}
\subjclass[2010]{Primary 20F55, 52C35, 14N20; Secondary 13N15}
\begin{document}

\title{On the $K(\pi, 1)$-problem for restrictions of complex reflection arrangements}

\author[N. Amend]{Nils Amend}
\address
{Institut f\"ur Algebra,~Zahlentheorie und Diskrete Mathematik,
Fakult\"at f\"ur Mathematik und Physik,
Gottfried Wilhelm Leibniz Universit\"at Hannover,
Welfengarten 1, D-30167 Hannover, Germany}
\email{amend@math.uni-hannover.de}

\author[P. Deligne]{Pierre Deligne}
\address
{Institute for Advanced Study, 
1 Einstein Drive,
Princeton, New Jersey
08540, USA}
\email{deligne@math.ias.edu}

\author[G. R\"ohrle]{Gerhard R\"ohrle}
\address
{Fakult\"at f\"ur Mathematik,
Ruhr-Universit\"at Bochum,
Universit\"atsstra{\ss}e 150,
D-44780 Bochum, Germany}
\email{gerhard.roehrle@rub.de}

\keywords{Complex reflection groups,
	reflection arrangements, 
	restricted arrangements, 
	$K(\pi,1)$-arrangements,
	Eilenberg--MacLane space}

\pagenumbering{arabic}
\allowdisplaybreaks

\begin{abstract}
Let $W\subset \GL(V)$ be a complex reflection group, and $\CA(W)$ the set of the mirrors 
of the complex reflections in $W$. 
It is known that the complement $X(\CA(W))$ of the reflection arrangement $\CA(W)$ is a $K(\pi,1)$ space.

For $Y$ an intersection of hyperplanes in $\mathscr A(W)$, let $X(\mathscr A(W)^Y)$ be the complement in $Y$ of the hyperplanes in $\mathscr A(W)$ not containing $Y$. 
We hope that $X(\mathscr A(W)^Y)$ is always a $K(\pi,1)$. We prove it in case 
of the monomial groups $W = G(r,p,\ell)$.
Using known results, we then show that there remain only three irreducible complex 
reflection groups, leading to just eight such induced arrangements for which this 
$K(\pi,1)$ property remains to be proved.
\end{abstract}

\maketitle

\section{Introduction}
An \emph{arrangement} in a vector space $V$ is a finite set of homogeneous hyperplanes in $V$. For integers $\ell \geqslant 2,\ 0 \leq k \leq \ell$ and $r\geqslant1$, we define $\mathscr A^k_{\ell}(r)$ to be the arrangement in the complex vector space $\mathbb{C}^{\ell}$ (coordinates $y_1, \ldots,y_{\ell}$) consisting of the first $k$ coordinate hyperplanes $y_a=0 \ (1\leq a\leq k)$ and of the hyperplanes $y_i=\zeta y_j$ for $i\ne j$ and $\zeta$ an $r\th$ root of unity.

For $\mathscr A$ an arrangement in a complex vector space $V$, we define $X(\mathscr A)$ to be the complement in $V$ of the union of the hyperplanes in $\mathscr A$. We say that the arrangement $\mathscr A$ is of $K(\pi,1)$ type, or a $K(\pi,1)$-arrangement, if $X(\mathscr A)$ is a $K(\pi,1)$, that is if the homotopy groups $\pi_i$ of $X(\mathscr A)$ are trivial for $i\geqslant 2$, or equivalently if the universal covering of $X(\mathscr A)$ is contractible. Our main result is the following.

\begin{theorem}
\label{thm:main}
The arrangements $\CA^k_\ell(r)$ are of $K(\pi,1)$ type.
\end{theorem}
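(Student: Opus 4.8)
The plan is to prove Theorem~\ref{thm:main} by a double induction, on $\ell$ and on the number $\ell-k$ of coordinates that carry no coordinate hyperplane. First I would dispose of the two ends of the range: $\CA^0_\ell(r)$ is the reflection arrangement of $G(r,r,\ell)$ and $\CA^\ell_\ell(r)$ is the reflection arrangement of $G(r,1,\ell)$ (it is the braid arrangement on $\ell+1$ strands when $r=1$), so both are $K(\pi,1)$ by the known results on reflection arrangements. For $r=1$ more is true: $\CA^k_\ell(1)$ is the graphic arrangement of the graph obtained from the complete graph on $\{1,\dots,\ell\}$ by adjoining one further vertex joined to $k$ of them; this graph is chordal, so the arrangement is supersolvable, hence $K(\pi,1)$. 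Thus the substance is the range $1\le k\le\ell-2$ with $r\ge2$.

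The starting point of the induction on $\ell-k$ is the forgetful projection. For $k<\ell$ the last coordinate carries no coordinate hyperplane and $p\colon X(\CA^k_\ell(r))\to X(\CA^k_{\ell-1}(r))$, $(y_1,\dots,y_\ell)\mapsto(y_1,\dots,y_{\ell-1})$, is surjective with fibre $\BBC$ minus the finite set $\{\zeta y_j: 1\le j\le\ell-1,\ \zeta^r=1\}$. When $k=\ell-1$ the base $X(\CA^{\ell-1}_{\ell-1}(r))$ forces all $y_j\ne0$, this set has constant cardinality $r(\ell-1)$, and $p$ is a locally trivial fibre bundle with aspherical fibre over the aspherical base $X(\CA(G(r,1,\ell-1)))$; hence $X(\CA^{\ell-1}_\ell(r))$ is $K(\pi,1)$. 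It is essential to notice that this fails already for $k=\ell-2$: over the locus where a free coordinate vanishes, $r$ of the punctures of the fibre collide, so for $r\ge2$ the fibres are not of constant homotopy type and $p$ is not a Serre fibration. A different device is therefore needed, and this is where the special geometry of the monomial arrangements has to enter.

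For the inductive step ($1\le k\le\ell-2$, $r\ge2$) I would use deletion--restriction with respect to the coordinate hyperplane $H_0=\{y_{k+1}=0\}$. One has the disjoint decomposition $X(\CA^k_\ell(r))=X(\CA^{k+1}_\ell(r))\sqcup D$ with $D=X(\CA^k_\ell(r))\cap H_0$; restricting the defining linear forms to $H_0$ turns each $\{y_i=\zeta y_{k+1}\}$ and $\{y_{k+1}=\zeta y_j\}$ into a coordinate hyperplane, so $D\cong X(\CA(G(r,1,\ell-1)))$, which is $K(\pi,1)$, while $X(\CA^{k+1}_\ell(r))$ is $K(\pi,1)$ by the induction on $\ell-k$. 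Thus $X(\CA^k_\ell(r))$ is obtained from the aspherical space $X(\CA^{k+1}_\ell(r))$ by adjoining a tubular neighbourhood of the aspherical divisor $D$. On fundamental groups this kills only the meridian $\mu$ of $H_0$, so $\pi_1(X(\CA^k_\ell(r)))=\pi_1(X(\CA^{k+1}_\ell(r)))/\langle\!\langle\mu\rangle\!\rangle$, and the universal covering of $X(\CA^k_\ell(r))$ is the covering of $X(\CA^{k+1}_\ell(r))$ associated with $\langle\!\langle\mu\rangle\!\rangle$ — which is aspherical since $X(\CA^{k+1}_\ell(r))$ is — with the preimages of $D$ filled in along their normal discs. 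Its contractibility reduces to two statements about $X(\CA^{k+1}_\ell(r))$: that the meridian torus $D\times S^1$ is incompressible, i.e.\ $\pi_1(D)\times\langle\mu\rangle$ injects, and that the normal closure of $\mu$ meets $\pi_1(D)\times\langle\mu\rangle$ in exactly $\langle\mu\rangle$; granting these, the filled covering is assembled from contractible pieces along trees of circles and is contractible.

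I expect this last point — the incompressibility of the meridian torus of $H_0$ in $X(\CA^{k+1}_\ell(r))$ and the resulting control of the normal closure of $\mu$ — to be the main obstacle. To handle it one should exploit the fibre-bundle description $X(\CA^{k+1}_\ell(r))\to X(\CA(G(r,1,k+1)))$ obtained by forgetting the last $\ell-k-1$ coordinates (whose fibre is a suitably generic affine slice of the reflection-arrangement complement of $G(r,r,\ell)$), the $G(r,1,k+1)\times G(r,1,\ell-k-1)$ symmetry, and the behaviour of meridians under these maps; the cases $r=1$ and $k\in\{0,\ell-1,\ell\}$ are exactly what makes the induction start. Finally, one should record that the restrictions of $\CA(G(r,p,\ell))$ to intersections of mirrors are precisely the arrangements $\CA^k_\ell(r)$ (with $k$ the number of parts of size $\ge2$ in the corresponding partition, and $k=\ell$ as soon as some coordinate is forced to vanish), so that Theorem~\ref{thm:main} does settle the $K(\pi,1)$ problem for $W=G(r,p,\ell)$.
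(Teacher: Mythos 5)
Your base cases are fine ($k=0$, $k=\ell$, $r=1$, and $k=\ell-1$ via the one-coordinate projection), and your observation that the naive forgetful projection fails to be a fibration for $k\le\ell-2$ is correct and pertinent. But the proposal has a genuine gap exactly where you locate "the main obstacle": the inductive step from $\CA^{k+1}_\ell(r)$ to $\CA^k_\ell(r)$ is a deletion, and deleting a hyperplane does not in general preserve asphericity (for instance, the four hyperplanes $x_1=x_2$, $x_2=x_3$, $x_3=x_4$, $x_1=x_4$ inside the braid arrangement $\mathrm{A}_3$ are in general position, hence not $K(\pi,1)$ by Hattori, although the full braid arrangement is). So the step cannot be formal; it must use specific geometry, and you do not supply it. What you offer instead is (i) a reduction to two group-theoretic conditions (injectivity of $\pi_1(D)\times\langle\mu\rangle$ in $\pi_1(X(\CA^{k+1}_\ell(r)))$ and control of the normal closure of $\mu$), neither of which is proved, and (ii) the assertion that, granting these, the filled-in cover "is assembled from contractible pieces along trees of circles and is contractible", which is not an argument: the Whitehead-type gluing criterion fails here because the meridian dies in the tubular neighbourhood, and filling in the preimages of $D$ along normal discs is a two-dimensional modification that can a priori create $\pi_2$; showing it does not is essentially the whole theorem. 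The auxiliary tool you propose for (i), namely that forgetting the last $\ell-k-1$ coordinates gives a fibre bundle $X(\CA^{k+1}_\ell(r))\to X(\CA(G(r,1,k+1)))$, is itself unproven and doubtful for the same degeneration phenomenon you identified: the free coordinates in the fibre may vanish, so the fibrewise puncture/hyperplane configurations degenerate and the usual coordinate-by-coordinate argument breaks down.

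For comparison, the paper avoids any deletion induction: it maps $X(\CA^k_\ell(r))$ to the braid arrangement complement $X(\mathrm{A}_{\ell-1})$ by the nonlinear map $x_i=y_1\cdots y_k\,y_i^r$, and shows this is a smooth fibre bundle by compactifying the fibres in $\BBP^\ell(\BBC)$, checking via Bezout that they are smooth complete-intersection curves meeting the hyperplane at infinity transversally, and applying Ehresmann's lemma in the relative form. The fibres are then smooth affine curves, hence aspherical, and Lemma \ref{lem:fibers} gives the result for all $k$, $\ell$, $r$ at once. If you want to salvage your outline, the part that must be supplied is precisely a geometric mechanism (like this fibration) that controls the higher homotopy when the hyperplane $y_{k+1}=0$ is removed; the meridian bookkeeping alone will not do it.
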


For $V$ a finite dimensional complex vector space, an element $s$ of $\GL(V)$ is a \emph{complex reflection} if its fixed point set is a hyperplane. This hyperplane is called the \emph{mirror} of $s$. If $W \subset \GL(V)$ is a complex reflection group, that is a finite subgroup of $\GL(V)$ generated by complex reflections, the \emph{arrangement} $\mathscr A(W)$ is the set of mirrors of the complex reflections in $W$. The arrangements so obtained are the \emph{reflection arrangements}.  
Note that for $\ell \ge 3$, $r \ge 2$, and  $0 < k < \ell$,
the $\mathscr A^k_{\ell}(r)$ are not reflection arrangements.

If $X\subset V$ is the intersection of some hyperplanes belonging to an arrangement $\CA$ in $V$, the arrangement $\CA^X$ \emph{induced} by $\CA$ on $X$ is the set of the traces on $X$ of the hyperplanes in $\CA$ not containing $X$. The arrangements \emph{induced} by $\CA$ are the arrangements so obtained.

The $K(\pi, 1)$ property is not generic among all arrangements.
A \emph{generic} complex 
$\ell$-arrange\-ment $\CA$ for $\ell \ge 3$
is an $\ell$-arrangement with at least $\ell + 1$ hyperplanes and
the property that the hyperplanes of every subarrangement
$\CB \subseteq \CA$ with $\vert\CB\vert = \ell$
are linearly independent. It follows from work of Hattori \cite{hattori} that
generic arrangements are never $K(\pi, 1)$.

By Deligne \cite{deligne}, complexified simplicial arrangements are $K(\pi, 1)$.
Likewise for complex fiber-type arrangements, 
cf.~\cite{falkrandell:fiber-type} and \cite{terao:modular}.
As restrictions of simplicial (resp.~ fiber-type)
arrangements are again simplicial 
(resp.~ fiber-type), 
the $K(\pi, 1)$-property of these kinds of arrangements
is inherited by their restrictions.
However, we emphasize that in general, a restriction of a 
$K(\pi, 1)$-arrangement need not be $K(\pi, 1)$ again, see 
\cite{amendmoellerroehrle:aspherical} for examples of this kind.

Along with the previously known instances of 
$K(\pi,1)$ restrictions of reflection arrangements,
it follows from Theorem \ref{thm:main} that 
only a small number of restrictions of rank $3$ and $4$ of arrangements associated to 
some non-real exceptional groups remain unresolved.
This provides strong evidence towards
the following.

\begin{Hope}
\label{conj:1}
Any arrangement induced from a reflection arrangement $\CA(W)$ is a $K(\pi,1)$.
\end{Hope}

This Hope reduces to the case of arrangements induced from reflection arrangements $\CA(W)$, for $W \subset \GL(V)$, such that $V$ is an irreducible representation of $W$. In what follows we, sometimes tacitly, only consider this case.

The Hope is true for $W$ the complexification of a real reflection group. Indeed, after reduction to the case where the intersection of all mirrors is reduced to \{0\}, such an $\CA(W)$ is the complexification of a simplicial arrangement (\cite{bourbaki:groupes} \Romannum{5} 3.9). The property of being the complexification of a simplicial arrangement is stable by induction, and one applies Deligne \cite{deligne}.

All reflection arrangements $\CA(W)$ are of $K(\pi,1)$ type. This theorem is due to 
Fadell and Neuwirth \cite{fadellneuwirth}, 
Brieskorn \cite{brieskorn:tresses},  
Nakamura \cite{nakamura}, Orlik and Solomon \cite{orliksolomon:discriminant} in special cases, and to Bessis \cite{bessis:kpione} in the general case. 

It follows from Theorem \ref{thm:main} that the arrangements induced from the reflection arrangements $\CA^0_{\ell}(r)$ and $\CA^{\ell}_{\ell}(r)$ are $K(\pi,1)$-arrangements.

Using those results and the trivial fact that for $\dim V \leq 2$ any arrangement is of $K(\pi,1)$ type, one gets 
our Hope in all but finitely many cases. 
More precisely, by Theorem \ref{thm:main}, our discussion above, and from the classification of the
irreducible complex reflection groups, our 
Hope reduces to $13$ instances 
when the underlying reflection group is 
of exceptional type.
Following 
\cite{orliksolomon:unitaryreflectiongroups},  
we label the $W$-orbit of $Y \in L(\CA(W))$ 
by the pair $(G_n, T)$, where $G_n$ is the relevant reflection group, 
in the Shephard-Todd numbering \cite{shephardtodd}, and $T$ is the 
type of the 
reflection subgroup of $G_n$ fixing  pointwise the  intersection of 
mirrors $Y$ we are considering.
In our cases, the type $T$ determines $Y$ up to $G_n$-conjugacy.
The 13 instances are
$(G_{29}, A_1)$, 
$(G_{31}, A_1)$,
$(G_{32}, C(3))$,
$(G_{33}, A_1)$,
$(G_{33}, A_1^2)$,
$(G_{33}, A_2)$,
$(G_{34}, A_1)$,
$(G_{34}, A_1^2)$,
$(G_{34}, A_2)$,
$(G_{34}, A_1^3)$,
$(G_{34}, A_1 A_2)$,
$(G_{34}, A_3)$, and 
$(G_{34}, G(3,3,3))$,
see \cite[\S 3, App.]{orliksolomon:unitaryreflectiongroups}.
The cases 
$(G_{32}, C(3))$ and $(G_{34}, G(3,3,3))$ 
can be handled as follows. The lattices of intersections of  
$(G_{32}, C(3))$ and $(G_{34}, G(3,3,3))$ are both isomorphic 
to the lattice of $\CA(G_{26})$, cf.~\cite[App.~D]{orlikterao:arrangements}.
Viewed projectively, the arrangement $\CA(G_{26})$ is the extended Hessian 
configuration of $21$ lines in $\BBP^{2}(\BBC)$, cf.~\cite[Ex.~6.30]{orlikterao:arrangements}.
It is classical that this configuration, as a set of $21$ lines, is determined by its combinatorics,
i.e.~by the isomorphism class of the corresponding lattice:
 the arrangements $(G_{32}, C(3))$ and $(G_{34}, G(3,3,3))$ are linearly isomorphic to the 
 reflection arrangement  $\CA(G_{26})$.
Therefore, since $\CA(G_{26})$ is a $K(\pi,1)$-arrangement, 
so are the restrictions $(G_{32}, C(3))$ and $(G_{34}, G(3,3,3))$.
Moreover, since the localization 
of a $K(\pi,1)$-arrangement is again  $K(\pi,1)$ (cf.~\cite[Lem.~1.1]{paris:deligne}),
 and since 
$G_{33}$ is a parabolic subgroup of  $G_{34}$ (\cite[Table 11]{orliksolomon:unitaryreflectiongroups}),
the three instances stemming from $G_{33}$ are localizations of 
the corresponding restrictions from  $G_{34}$. 
So verifying our Hope reduces 
to the remaining $8$ restrictions on the list above.

\section{Method of proof}
A reasonable topological space $X$, for instance a manifold or a CW complex, is a $K(\pi,1)$ if and only if it is connected (hence, by definition, not empty) and if for some (equivalently, any) base point $o\in X$, the homotopy groups $\pi_i(X,o)$ are trivial for $i \geqslant 2$. The long exact sequence of homotopy groups implies the 
\begin{lemma}
\label{lem:fibers}
Suppose that $X$ is connected and that $f\colon X \to Y$ is a fibration. Then, if $Y$ is a $K(\pi, 1)$ and if some connected component of some fiber $f^{-1}(y)$ is a $K(\pi, 1)$, so is $X$. 
\end{lemma}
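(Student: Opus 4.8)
The plan is to read off the conclusion from the long exact sequence of homotopy groups of $f$, fed by two inputs: the asphericity of $Y$, and the asphericity of the connected component of the base point inside its own fibre. The one step requiring genuine care is transferring the $K(\pi,1)$ hypothesis from the given component of the given fibre to the component containing the base point, and this is precisely where connectedness of $X$ is used.

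First I would set things up with consistent base points. Since $X$ is a reasonable space, its connected components coincide with its path components; by hypothesis $X$ is then path-connected and non-empty, so it is a legitimate candidate. Choose $o\in X$, put $y_0=f(o)$, let $F=f\inverse(y_0)$, and let $F_0$ be the path component of $F$ containing $o$, so that $\pi_i(F,o)=\pi_i(F_0,o)$ for every $i\geqslant1$. The long exact homotopy sequence of the fibration $f$ contains, for each $i\geqslant2$, the exact segment
\[
\pi_i(F_0,o)\longrightarrow\pi_i(X,o)\longrightarrow\pi_i(Y,y_0),
\]
and $\pi_i(Y,y_0)=0$ since $Y$ is a $K(\pi,1)$ (and path-connected, so the base point plays no role). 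Consequently, once we know $F_0$ is a $K(\pi,1)$ — whence $\pi_i(F_0,o)=0$ for $i\geqslant2$ — both outer terms vanish, forcing $\pi_i(X,o)=0$ for all $i\geqslant2$; together with connectedness of $X$ this is exactly the assertion that $X$ is a $K(\pi,1)$.

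It remains to prove that $F_0$ is a $K(\pi,1)$. It is path-connected and non-empty by construction, so I only need $\pi_i(F_0,o)=0$ for $i\geqslant2$. By hypothesis there are a point $y_1\in Y$ and a connected component $C$ of $f\inverse(y_1)$ which is a $K(\pi,1)$; fix $c\in C$. Since $X$ is path-connected, choose a path $\delta$ in $X$ from $c$ to $o$ and set $\gamma=f\circ\delta$, a path in $Y$ from $y_1$ to $y_0$. Parallel transport along $\gamma$ — obtained from the homotopy lifting property applied to the homotopy $f\inverse(y_1)\times[0,1]\to Y$, $(x,t)\mapsto\gamma(t)$, starting from the inclusion $f\inverse(y_1)\hookrightarrow X$ — is a homotopy equivalence $\tau_\gamma\colon f\inverse(y_1)\to f\inverse(y_0)$; and a homotopy equivalence restricts, on each path component of its source, to a homotopy equivalence onto a path component of its target. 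Since $\delta$ is itself a lift of $\gamma$ starting at $c$, its endpoint $o$ lies in the same path component of $f\inverse(y_0)$ as $\tau_\gamma(c)$ (two lifts of one path with the same initial point have endpoints in a common path component of the fibre). Hence $\tau_\gamma$ restricts to a homotopy equivalence of $C$ with $F_0$, so $F_0$ is a $K(\pi,1)$, as needed.

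The hard part — modest though it is — is this bookkeeping with fibre transport: the monodromy of $f$ along an arbitrary path joining $y_1$ to $y_0$ only pairs \emph{some} component of the source fibre with \emph{some} component of the target fibre, so one must pick the path deliberately. Routing $\gamma$ through an explicit path $\delta$ in the connected space $X$ from $c$ to $o$ is exactly what pins down the correspondence between $C$ and $F_0$. Everything else is the standard homotopy theory of fibrations, and on a reasonable space the distinction between components and path components is immaterial.
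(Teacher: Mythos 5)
Your argument is correct and follows essentially the same route as the paper, which simply asserts that the lemma follows from the long exact sequence of homotopy groups of the fibration. Your additional fibre-transport bookkeeping (using connectedness of $X$ to carry the given $K(\pi,1)$ component onto the component of the base-point fibre containing the base point) is a correct and welcome filling-in of the step the paper leaves implicit.
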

We fix $k,\ \ell,\ r$ as in Theorem \ref{thm:main}. We simply write $\CA$ for the arrangement $\CA^k_{\ell}(r)$ in $\mathbb{C}^{\ell}$. The coordinates of $\mathbb{C}^{\ell}$ are denoted $y_1,\ldots, y_{\ell}$. 

Let us consider another copy of $\mathbb{C}^{\ell}$, with coordinates $x_1,\ldots,x_{\ell}$. Let $V$ be the quotient of this vector space $\BBC^{\ell}$ by its diagonal subspace $\BBC$. The action of the symmetric group $S_{\ell}$ on $\BBC^{\ell}$ passes to the quotient. So do the linear forms $x_i - x_j$. The \emph{arrangement} $\textrm{A}_{\ell-1}$ on $V$ is the set of the hyperplanes $x_i - x_j=0$ of $V$. It is the reflection arrangement $\CA(S_{\ell})$ defined by the action of $S_{\ell}$ on $V$. It is of $K(\pi,1)$ type, and the fundamental group of $X(\textrm{A}_{\ell-1})$ is the pure braid group on $\ell$ strands.

The $z_i\coloneqq x_i - x_{\ell}\ (1 \leq i \leq \ell - 1)$ form a system of coordinates on $V$. In this system of coordinates, the arrangement $\textrm{A}_{\ell-1}$ is the arrangement $\CA^{\ell-1}_{\ell-1}(1)$ consisting of the coordinate hyperplanes $z_i=0$ and of the hyperplanes $z_i=z_j$ for $i\ne j$.

Our \emph{deus ex machina} is the composite map 
\begin{equation}
\label{def:eq1}
f\colon \BBC^{\ell} \mathrm{(coordinates}\ y_i)\longrightarrow \BBC^{\ell}\text{(coordinates}\ x_i)\longrightarrow V,
\end{equation}
where the first map in \eqref{def:eq1}, or rather its graph, is given by
\begin{equation}
\label{def:eq2}
x_i = y_1\cdots y_k\ y^r_i.
\end{equation}
It is equivariant for the subgroup $S_k\times S_{\ell-k}$ of $S_{\ell}$, acting on $\BBC^{\ell}$ and on $V\colon$ the coordinates $y_1,\ldots,y_k$, as well as $y_{k+1},\ldots,y_{\ell}$, play symmetric roles.

The inverse image by $f$ of the union of the hyperplanes in $\textrm{A}_{\ell-1}$ is the union of the hyperplanes in $\CA$. Indeed, the inverse image of the hyperplane $x_i-x_j=0$ is the union of the coordinate hyperplanes $y_a=0$ for $1 \leq a \leq k$, and of the hyperplanes $y_i=\zeta y_j$ for $\zeta$ an $r\th$ root of unity.

In the coordinate system $(z_i)$ of $V$, the map \eqref{def:eq1} is given by
\begin{equation}
\label{def:eq3}
z_i = y_1\cdots y_k\ (y^r_i-y^r_{\ell}).
\end{equation}
It induces a map, still denoted by $f$
\begin{equation}
\label{def:eq4}
f\colon\ X(\CA)\longrightarrow X(\mathrm{A}_{\ell-1}).
\end{equation}
\begin{theorem}
\label{thm:2}
The map $f \colon\ X(\CA)\longrightarrow X(\mathrm{A}_{\ell-1})$ realizes $X(\CA)$ as a smooth fiber space over $X(\mathrm{A}_{\ell-1})$.
\end{theorem}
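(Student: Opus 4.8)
The plan is to exhibit $f$ explicitly as a fiber bundle by trivializing it locally over $X(\mathrm A_{\ell-1})$, using the algebraic nature of the map \eqref{def:eq3}. Fix a point $z^0 = (z^0_1, \dots, z^0_{\ell-1}) \in X(\mathrm A_{\ell-1})$; I want to understand the fiber $f^{-1}(z^0)$ and then show that $f$ looks like a product near $z^0$. First I would analyze the fiber set-theoretically. Writing $P \coloneqq y_1 \cdots y_k$, a point $(y_1,\dots,y_\ell) \in X(\CA)$ lies in $f^{-1}(z^0)$ precisely when $P(y^r_i - y^r_\ell) = z^0_i$ for $1 \le i \le \ell-1$; note $P \ne 0$ on $X(\CA)$, and the differences $y^r_i - y^r_\ell$ are likewise nonzero there (for $i < \ell$), and $y^r_i - y^r_j = z^0_i - z^0_j)/P \ne 0$ for $i \ne j$. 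So on the fiber all the $y^r_i$ are distinct and nonzero. The idea is to use the value $w \coloneqq y^r_\ell$ together with $P$ as "free" parameters: once $w$ and $P$ are chosen, the equations force $y^r_i = w + z^0_i/P$ for $i < \ell$, and then $y_\ell, y_1, \dots, y_{\ell-1}$ are determined up to $r\th$ roots, subject only to the single constraint $y_1 \cdots y_k = P$.

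The key step is to organize this into a genuine locally trivial fibration. I would proceed as follows. Over a small polydisc $U \ni z^0$ in $X(\mathrm A_{\ell-1})$, the functions $z \mapsto z_i - z_j$ and $z \mapsto z_i$ are invertible after possibly shrinking; more precisely, for $(y,z)$ with $f(y) = z$, the quantities $P$ and $w = y^r_\ell$ vary holomorphically, and I would build a local section and then spread it out. Concretely, pick a continuous (holomorphic) choice of $r\th$ roots: given a base fiber coordinate description and a target point $z \in U$, set $y_\ell(z)$ to be an $r\th$ root of $w$, and $y_i(z)$ an $r\th$ root of $w + z_i/P$ for $i < \ell$, chosen to depend continuously on $z$ and to agree at $z^0$ with the given starting point; the constraint $\prod_{a \le k} y_a = P$ is then used to solve for $P$ as a function of $w$ (or to constrain $w$), which is where one must be a little careful. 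The cleanest route is to treat $(P, w)$ jointly: the pair must satisfy $\prod_{a=1}^k (w + z_a/P)^{1/r} = P$, i.e. $\prod_{a=1}^k (wP + z_a) = P^{k+r}$ after clearing denominators (an identity among the $r\th$ powers), which for fixed $z$ cuts out a smooth curve in $(P,w)$-space away from the forbidden loci; the remaining root-of-unity ambiguities are a finite covering. Altogether $f^{-1}(U)$ is identified, compatibly with projection to $U$, with $U \times f^{-1}(z^0)$.

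The main obstacle I anticipate is precisely the bookkeeping around the multivaluedness of the $r\th$ roots combined with the single multiplicative constraint $y_1 \cdots y_k = P$: one must check that the local trivializations can be chosen consistently, that the "exceptional" set where roots collide or where $P \to 0$ or $w + z_i/P$ vanishes is exactly the complement of $X(\CA)$ over $X(\mathrm A_{\ell-1})$ (so nothing is lost), and that smoothness of $f$ holds — i.e.\ that $df$ is everywhere surjective on $X(\CA)$, which should follow by computing the Jacobian of \eqref{def:eq3} with respect to suitable coordinates and checking it has full rank $\ell-1$ on $X(\CA)$, using $P \ne 0$ and the distinctness of the $y^r_i$. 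An alternative, perhaps slicker, approach: observe that $f$ factors through the $r$-fold power maps and a linear map, and that both $X(\CA)$ and $X(\mathrm A_{\ell-1})$ are smooth affine varieties with $f$ a dominant morphism whose fibers all have the same dimension $\ell - (\ell-1) = 1$; one could then invoke a generic-smoothness plus homogeneity argument ($f$ is equivariant for the scaling $y_i \mapsto t y_i$, which acts on the target by $z_i \mapsto t^{k+r} z_i$) to upgrade from generic local triviality to global local triviality over all of $X(\mathrm A_{\ell-1})$. I would present the explicit-trivialization argument as the main proof and remark on the homogeneity shortcut.
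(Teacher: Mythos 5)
Your strategy has two genuine gaps, and the more serious one is the complete absence of any control at infinity. A surjective submersion with \emph{noncompact} fibers is not automatically a fiber bundle, so checking that $df$ has rank $\ell-1$ and then "continuing" a fiber point by a continuous choice of $r$-th roots only produces product charts near each point of a fiber; it does not give a neighborhood $U$ of $z^0$ that works uniformly over the whole affine curve $f^{-1}(z^0)$, because points of the fiber can escape to infinity as $z$ varies. Your reduction to the curve $\prod_{a\le k}(wP+z_a)=P^{k+r}$ in the $(P,w)$-plane is (after $a=1/P$, $b=w$) exactly the plane curve $a^r\prod_{i\le k}(az_i+b)=1$ that the paper uses -- but only to prove smoothness of $F_z$ (its smoothness is not free: in the paper it is Lemma \ref{lem:3.5}, via Euler's identity). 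For local triviality you have merely traded one family of affine curves over $X(\mathrm{A}_{\ell-1})$ for another, and asserting that the new family trivializes over $U$ is the original problem again. The paper's actual mechanism is to compactify: it shows by a Bezout count that the closures $\overline F_z\subset\BBP^{\ell}(\BBC)$ are smooth complete-intersection curves meeting the hyperplane at infinity transversally (Lemma \ref{lem:3.2}), and then applies the relative Ehresmann lemma (Lemma \ref{lem:2.6}) to the \emph{proper} family in $\BBP^{\ell}(\BBC)\times X(\mathrm{A}_{\ell-1})$ with $N$ the divisor at infinity. Your proposed fallback does not repair this either: generic local triviality plus $\BBC^*$-equivariance ($z\mapsto t^{k+r}z$) only says the bad locus is a cone, and since the scaling action is nowhere near transitive on $X(\mathrm{A}_{\ell-1})$, a cone need not be empty.

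There is also a concrete error in your fiber analysis: it is not true that all $y_i^r$ are nonzero on $F_z$. Only the hyperplanes $y_a=0$ with $a\le k$ belong to $\CA$, so exactly one of $y_{k+1},\dots,y_\ell$ may vanish on the fiber (the $y_i^r$ are distinct, so at most one, but that one does occur, e.g.\ where $w=0$ or $wP+z_i=0$). At such points your description of the fiber as an \emph{unbranched} finite covering of the $(P,w)$-curve by independent choices of $r$-th roots breaks down -- the covering is branched there, and the "continuous choice of roots depending on $z$" is not defined near those points. The paper treats precisely these points by a separate counting argument (Lemma \ref{lem:3.3}) before reducing to the torus where $Y_i=y_i^r$ are legitimate coordinates. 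So as written the proposal both misses the crucial properness/at-infinity step and silently excludes fiber points that genuinely exist and require their own argument.
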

A consequence of Theorem \ref{thm:2} is that the fibers of $f\colon X(\CA) \longrightarrow X(\mathrm{A}_{\ell-1})$ are non empty smooth affine curves. The connected components of such a curve are again smooth and affine, hence are $K(\pi,1)$. Indeed, the only Riemann surface which is not a $K(\pi,1)$ is the sphere. As $X(\CA)$ is connected and $X(\mathrm{A}_{\ell-1})$ is a $K(\pi,1)$, Theorem \ref{thm:main} is a consequence of Theorem \ref{thm:2} and Lemma \ref{lem:fibers}.

The proof of Theorem \ref{thm:2} is given in the next section, where we use the following lemma.
\begin{lemma}
\label{lem:2.6}
Let $M$ and $B$ be $C^{\infty}$-manifolds, $N$ a closed submanifold of $M$ and $f \colon M \longrightarrow B$  a morphism. If $f$ is proper, submersive, and with a restriction to $N$ submersive, then, locally on $B, f \colon (M,N) \longrightarrow B$ is isomorphic to a projection $(M_0\times B, N_0 \times B)\longrightarrow B$. A fortiori, $f \colon M-N \longrightarrow B$ is a smooth fiber bundle.
\end{lemma}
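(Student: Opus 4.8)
The plan is to prove Lemma~\ref{lem:2.6} by the standard Ehresmann fibration argument, carried out simultaneously for $M$ and for the pair $(M,N)$. Since the statement is local on $B$, I may assume $B$ is an open ball in $\BBR^n$ with coordinates $t_1,\dots,t_n$, and it suffices to produce, in a neighborhood of a given point $b_0\in B$, a diffeomorphism over $B$ between $(M,N)$ and $(M_0,N_0)\times B$, where $M_0=f^{-1}(b_0)$ and $N_0=N\cap M_0$. The hypotheses give that $f$ is a submersion and $f|_N$ is a submersion, so $M_0$ is a smooth submanifold of $M$ and $N_0$ is a smooth submanifold of $M_0$ (and of $N$).

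The key step is the construction of suitable vector fields. For each coordinate direction $\partial/\partial t_j$ on $B$, I want a $C^\infty$ vector field $\xi_j$ on $M$ such that $f_*\xi_j=\partial/\partial t_j$ and, crucially, $\xi_j$ is tangent to $N$ along $N$. Locally near any point of $M$ such a lift exists because $f$ is submersive; near a point of $N$ one first chooses a lift tangent to $N$ using that $f|_N$ is submersive, then extends it off $N$ keeping it a lift of $\partial/\partial t_j$ (the space of such lifts is an affine bundle, so local choices patch). A partition of unity on $M$ subordinate to a cover by such local charts then glues these into global $\xi_j$ on $M$ with $f_*\xi_j=\partial/\partial t_j$ everywhere and $\xi_j$ tangent to $N$ along $N$. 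Properness of $f$ guarantees that the flows of the $\xi_j$ are defined for all time needed to reach any point of $B$ from $b_0$ — here properness is the essential ingredient, and this is the step I expect to require the most care, since without it the flow lines could escape to infinity before covering $B$.

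With the $\xi_j$ in hand, define $\Phi\colon M_0\times B\to M$ by $\Phi(m,t)=\big(\varphi^{t_n}_n\circ\cdots\circ\varphi^{t_1}_1\big)(m)$, where $\varphi^s_j$ is the time-$s$ flow of $\xi_j$ and $t=(t_1,\dots,t_n)$ is measured from $b_0$; completeness of the flows over the relevant region makes this well defined after shrinking $B$ to a small ball around $b_0$. Because each $\xi_j$ is $f$-related to $\partial/\partial t_j$, one checks $f\circ\Phi(m,t)=t$, so $\Phi$ is a morphism over $B$; it is a diffeomorphism because it is a bijection (the inverse flows recover $m$) with invertible differential, the latter following from the fact that $\Phi$ restricted to $M_0\times\{b_0\}$ is the identity and the flow directions span the $B$-directions transversally. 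Finally, since each $\xi_j$ is tangent to $N$ along $N$, its flow preserves $N$, hence $\Phi$ maps $N_0\times B$ onto $N$; thus $\Phi$ is an isomorphism of pairs $(M_0\times B,N_0\times B)\xrightarrow{\ \sim\ }(M,N)$ over $B$.

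The last assertion follows immediately: restricting $\Phi$ to the complement of $N$ gives a diffeomorphism $(M_0\setminus N_0)\times B\xrightarrow{\ \sim\ }M\setminus N$ over $B$, so $f\colon M-N\to B$ is locally trivial, i.e.\ a smooth fiber bundle. The only genuine subtlety beyond properness is ensuring the lifts $\xi_j$ can be chosen tangent to $N$; this is why the hypothesis that $f|_N$ be submersive is needed, and it is exactly what makes the flows preserve $N$ and yield triviality of the pair rather than merely of $M$ over $B$.
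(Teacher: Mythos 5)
Your proof is correct and takes essentially the same approach as the paper: lift the coordinate vector fields of $B$ to vector fields on $M$ that are tangent to $N$ (locally by submersivity of $f$ and $f|_N$, globally via a partition of unity) and integrate them, with properness guaranteeing the flows exist long enough. The only cosmetic difference is that the paper organizes the argument as an induction on $\dim B$, trivializing one coordinate direction at a time, whereas you compose all the flows at once.
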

For $N$ empty, the lemma first appeared without proof in  
\cite[Prop.~1]{ehresmann}. For the sake of completeness, we now explain the folklore proof of Lemma \ref{lem:2.6} in the 
case when $N$ is empty, and then explain how to extend it to the general case.

The question being local on $B$, we may assume that $B$ is of the form $\rbrack-1,1\lbrack^\ell$ (coordinates $t_i,\ldots, t_\ell)$, and we proceed by induction on $\ell$, the case $\ell=0$ being trivial. The vector field $\partial_{t_{\ell}}$ on $B$ can be lifted to a vector field $X$ on $M$. Indeed, such a lifting exists locally on $M$, and one uses a partition of unity to get a global lifting from local liftings. As $df(X)=\partial_{t_{\ell}}$, by integrating $X$ we obtain isomorphisms between the fibers of $f$ at ($t_1,\ldots,t_{\ell-1}, 0$) and ($t_1,\ldots,t_{\ell-1}, t_{\ell}$). These isomorphisms identify $M\longrightarrow B$ with the pull-back by $\rbrack-1, 1\lbrack^{\ell}\longrightarrow \rbrack-1, 1\lbrack^{\ell-1}$ of the restriction of $M\longrightarrow B$ to $\rbrack-1, 1\lbrack^{\ell-1}\times\lbrace 0\rbrace \subset B$. One concludes using the induction hypothesis.

The proof of Lemma \ref{lem:2.6} is identical : one just needs to choose the lifting $X$ of $\partial_{t_{\ell}}$ to be tangent to $N$.
\section{Proof of Theorem \ref{thm:2}}
The fiber $F_z$ at $z \in X(\mathrm{A}_{\ell-1})$ of $f\colon\ X(\CA) \longrightarrow X (\mathrm{A}_{\ell-1})$ is given, in $\BBC^{\ell}$, by the equations
\begin{equation}
    \tag{1}
    \label{eq:5}
y_1\cdots y_k (y^r_i-y^r_{\ell})=z_i \qquad (i=1,\ldots,\ell-1).
\end{equation}
Any of these equations implies that $y_1,\ldots, y_k \ne 0$. Their system is equivalent to the first equation
\begin{equation}
    \tag{2}
    \label{eq:6}
y_1 \cdots y_k (y^r_1 - y^r_{\ell}) = z_1 ,
\end{equation}
supplemented by the equations
\begin{equation}
    \tag{3}
    \label{eq:7}
\frac{1}{z_1} (y^r_1 - y^r_{\ell}) = \frac{1}{z_i} (y^r_i - y^r_{\ell})\qquad(2\leq i \leq \ell - 1)
\end{equation}
which are homogeneous of degree $r$ in the $y_i$. 

Let us compactify $\BBC^{\ell}$ into $\BBP^{\ell}(\BBC)$. In $\BBP^{\ell}(\BBC)$, we will use the homogeneous coordinates $y_0, y_1, \ldots, y_{\ell},\ y_0 = 0$ being the equation of the hyperplane at infinity added to $\BBC^{\ell}$. To compactify the fiber $F_z$ it suffices to take the projective variety $\overline{F}_z$ defined by the homogeneous equations (\ref{eq:7}), and by (\ref{eq:6}) made homogeneous, that is
\begin{equation}
    \tag{2\textprime}
    \label{eq:6.1}
y_1 \cdots y_k (y^r_1 - y^r_{\ell}) = z_1\ y^{k+r}_0,
\end{equation}
an equation homogeneous of degree $k+r$ in the $y_i$.

It will be convenient to define $z_{\ell}\coloneqq 0$. With this notation, (\ref{eq:7}) tells that the $y^r_i - y^r_{\ell}$ are proportional to the $z_i - z_{\ell}$  and it follows that all $y^r_i-y^r_j$ are proportional to the $z_i-z_j$ : for some $u,\ y^r_i - y^r_j = u (z_i - z_j)$. 

To compute the intersection of this compactification $\overline{F}_z$ with the hyperplane at infinity $H_{\infty}$, it suffices to put $y_0$ equal to 0, and to view $y_1,\ldots, y_{\ell}$ as projective coordinates for the hyperplane at infinity. We obtain $kr^{\ell-2}+r^{\ell-1}$ distinct points, as follows. One of the factors at the left side of (\ref{eq:6.1}) must vanish. If $y_i=0$  
$ (1 \leq i \leq k)$, the $y^r_j=y^r_j-y^r_i$ are proportional to the $z_j - z_i$, and we get the $r^{\ell-2}$ points with coordinates $$(y_i=0, y_j= (z_j-z_i)^{\frac{1}{r}})$$ (to be taken up to multiplying by a common $r\th$ root of $1$).\\
If $y^r_1-y^r_{\ell}=0$, all $y^r_i - y^r_j$ must vanish. We get the $r^{\ell-1}$ points "all $y_i$ are an $r\th$ root of 1", again taken up to multiplication by a common $r\th$ root of 1.
\begin{lemma}
\label{lem:3.2}
The compactification $\overline{F}_z$ of the fiber $F_z$ of $f$ at $z$ defined by the $(\ell-1)$ equations \emph{(\ref{eq:6.1})} and \emph{(\ref{eq:7})} is a complete intersection curve, smooth at infinity, and meeting transversally the hyperplane at infinity $H_{\infty}$.
\end{lemma}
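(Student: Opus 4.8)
\medskip

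The plan is to verify the three assertions — complete intersection, smoothness at infinity, and transversality with $H_\infty$ — essentially simultaneously, by a local computation at each of the $kr^{\ell-2}+r^{\ell-1}$ points of $\overline F_z \cap H_\infty$ enumerated above. First I would observe that $\overline F_z$ is cut out in $\BBP^\ell(\BBC)$ by the $\ell-1$ equations \eqref{eq:6.1} and \eqref{eq:7}, so it has dimension $\geqslant 1$ everywhere; to know it is a complete intersection curve it suffices to check that the common zero locus of these $\ell-1$ forms has no component of dimension $\geqslant 2$, and for this — together with smoothness — it is enough to show that at every point of the intersection with $H_\infty$ the differentials of the $\ell-1$ defining equations, restricted to $H_\infty$, are linearly independent. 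Indeed, if that holds, then near each such point $\overline F_z \cap H_\infty$ is a reduced point (so the $\ell-1$ forms together with $y_0$ are a regular system of parameters), $\overline F_z$ is a smooth curve there meeting $H_\infty$ transversally, and since $\overline F_z$ is closed in $\BBP^\ell$ and its part in $\BBC^\ell$ is the affine fiber $F_z$, no extra high-dimensional component can hide at infinity.

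\medskip

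Next I would do the two local computations. On the chart where we set $y_0=0$ and use $y_1,\dots,y_\ell$ as projective coordinates on $H_\infty$, equation \eqref{eq:6.1} degenerates to $y_1\cdots y_k\,(y_1^r-y_\ell^r)=0$ and equations \eqref{eq:7} become, after clearing denominators, the conditions that the $y_i^r-y_\ell^r$ be proportional to the $z_i-z_\ell$ (recall $z_\ell\coloneqq 0$). \textbf{Case 1:} at a point with some $y_{i_0}=0$, $1\leqslant i_0\leqslant k$, and all other coordinates nonzero, the differential of the factor $y_{i_0}$ in \eqref{eq:6.1} is $dy_{i_0}$, while the $\ell-2$ equations \eqref{eq:7} involve, to first order, only the variables $y_j$ with $j\neq i_0$ (their differentials are $r y_j^{r-1}\,dy_j$ minus a multiple of the corresponding $dy_\ell$-terms), and these are independent because the $y_j$ ($j\neq i_0$) are nonzero and the $z_j-z_{i_0}$ are pairwise distinct — this is exactly where one uses $z\in X(\mathrm A_{\ell-1})$. \textbf{Case 2:} at a point where $y_1^r-y_\ell^r=0$, so all $y_i$ are $r$th roots of unity (up to a common scalar) and in particular all nonzero, the differential of the factor $y_1^r-y_\ell^r$ of \eqref{eq:6.1} is $r y_1^{r-1}dy_1 - r y_\ell^{r-1}dy_\ell$, and the differentials of \eqref{eq:7} are the $\ell-2$ forms expressing that $d(y_i^r-y_\ell^r)$ is the appropriate multiple of $d(y_1^r-y_\ell^r)$; since all $y_i\neq 0$ and $r\geqslant 1$, the change of variables $w_i \coloneqq y_i^r$ is a local isomorphism near this point, and in the $w$-coordinates the $\ell-1$ forms become the differentials of $w_1-w_\ell$ and $w_i-w_\ell-c_i(w_1-w_\ell)$ ($2\leqslant i\leqslant \ell-1$) for suitable constants $c_i$, which are manifestly independent.

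\medskip

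Having checked independence of the $\ell-1$ differentials (restricted to $T H_\infty$) at every point of $\overline F_z\cap H_\infty$, the three conclusions follow as explained: $\overline F_z$ is smooth of dimension $1$ at each point of $H_\infty$, meets $H_\infty$ transversally there, and — since $\overline F_z \smallsetminus H_\infty = F_z$ is the fiber of the map \eqref{def:eq4}, which one already knows (from the equations \eqref{eq:5}) to be an affine curve — the scheme defined by \eqref{eq:6.1} and \eqref{eq:7} is equidimensional of dimension $1$, hence a complete intersection curve. The one point demanding care, and the main obstacle, is Case 1: one must be careful that when a coordinate hyperplane factor $y_{i_0}$ vanishes the remaining equations \eqref{eq:7} still impose $\ell-2$ independent conditions on the surviving coordinates, which hinges precisely on the hypothesis that $z$ lies in the complement $X(\mathrm A_{\ell-1})$ (all $z_i$ distinct, and distinct from $z_\ell=0$); without it the points at infinity would collide and $\overline F_z$ would acquire singularities or extra components there. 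A secondary subtlety is to make sure the count $kr^{\ell-2}+r^{\ell-1}$ really is the full intersection with multiplicity one, which is automatic once transversality is established but should be noted to confirm no point of $H_\infty$ has been overlooked.
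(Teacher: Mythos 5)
Your proposal is correct in substance, but it proves the lemma by a genuinely different route than the paper. You establish smoothness at infinity and transversality by an explicit Jacobian computation: at each of the enumerated points of $\overline F_z\cap H_\infty$ you check that the differentials of the $\ell-1$ defining equations, restricted to $T H_\infty$, are linearly independent, and you deduce the complete-intersection statement from the resulting finiteness of $\overline F_z\cap H_\infty$. The paper gets the complete-intersection part the same way (a component of dimension $>1$ would meet $H_\infty$ in positive dimension), but for smoothness and transversality it avoids any local computation: by Bezout the intersection of $\overline F_z$ with $H_\infty$ has total multiplicity $(k+r)r^{\ell-2}$, and since the enumeration already exhibits exactly $kr^{\ell-2}+r^{\ell-1}=(k+r)r^{\ell-2}$ distinct points, every local intersection multiplicity is $1$, which forces smoothness of $\overline F_z$ at each such point and transversality of the intersection. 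The Bezout argument is shorter, yields the multiplicity-one information directly (it is reused verbatim for Lemma \ref{lem:3.3}), and sidesteps the bookkeeping your Case 1 requires; your approach is more elementary and self-contained but needs care there. In particular, your parenthetical description of the differentials of \eqref{eq:7} is not quite right: each such equation involves $y_1$, $y_i$ and $y_\ell$, so its differential carries a $dy_1$-term, which is in fact what rescues independence when $i_0\neq 1$ (the $dy_{i_0}$-coefficient there is $z_1 r y_{i_0}^{r-1}$, which vanishes for $r\geqslant 2$ but not for $r=1$, so the claim that only the $y_j$ with $j\neq i_0$ appear to first order fails for $r=1$); the independence still holds, modulo the form $dy_{i_0}$ coming from \eqref{eq:6.1}, by a triangular elimination using $z_1\neq 0$, $z_{i_0}\neq 0$ and $y_j\neq 0$ for $j\neq i_0$, so this is a repairable imprecision rather than a gap.
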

\begin{proof}
If $\overline{F}_z$ had an irreducible component of dimension $>1$, the intersection of this component with $H_{\infty}$ would be of dimension $>0$, contradicting the finiteness of $\overline{F}_z \cap H_{\infty}$. It follows that $\overline{F}_z$, being defined by $(\ell-1)$ equations, is a complete intersection curve. By Bezout, the number of points in $\overline{F}_z \cap H_{\infty}$, each counted with its intersection multiplicity, is $(k+r)r^{\ell-2}$. It follows that each intersection multiplicity is one.
As $\overline{F}_z$ and $H_{\infty}$ are local complete intersections, this implies that 
 $\overline{F}_z$ is smooth at each point of $\overline{F}_z \cap H_{\infty}$, and that the intersection is transversal. 
\end{proof}

It follows from Lemma~\ref{lem:3.2} that $\overline{F}_z$ is simply the closure of $F_z$ in $\mathbb{P}^{\ell}(\BBC)$, and that the curve $\overline{F}_z$ is generically reduced, that is generically smooth, as a non reduced component would intersect $H_{\infty}$.

The same argument shows that
\begin{lemma}
\label{lem:3.3}
For $k+1\leqslant i \leqslant \ell$, the curve $\overline{F}_z$ is smooth at each of its intersection points with the hyperplane $y_i=0$.
\end{lemma}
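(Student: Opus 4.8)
The plan is to mimic the proof of Lemma~\ref{lem:3.2}, now localizing at the coordinate hyperplane $y_i=0$ for a fixed $i$ with $k+1\leqslant i\leqslant\ell$ instead of at the hyperplane at infinity. First I would compute, set-theoretically, the intersection $\overline F_z\cap\{y_i=0\}$. Setting $y_i=0$ in the relations $y^r_a-y^r_b=u(z_a-z_b)$ and in the homogenized equation~(\ref{eq:6.1}) forces $y^r_a=u(z_a-z_i)$ for all $a$; since $z\in X(\mathrm A_{\ell-1})$ the $z_a-z_i$ with $a\neq i$ are nonzero, so none of the remaining projective coordinates (including $y_0$, because $z_i\neq 0$ when $i\ne\ell$, and for $i=\ell$ one still has $z_1\neq 0$ so~(\ref{eq:6.1}) gives $u\ne 0$ hence $y_0$ may or may not vanish — I would treat the cases $i\le\ell-1$ and $i=\ell$ separately here) can be chosen freely once $u$ is fixed; substituting into~(\ref{eq:6.1}) pins $u$ down up to finitely many values, and then $(y_a)$ up to a common $r\th$ root of unity. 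This yields a finite set of points, and I would count them exactly, as in the paragraph preceding Lemma~\ref{lem:3.2}.

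Next, knowing $\overline F_z$ is already a complete intersection curve (Lemma~\ref{lem:3.2}), I would apply Bézout to the pencil section $\overline F_z\cap\{y_i=0\}$: the hyperplane $y_i=0$ has degree $1$, so the intersection cycle has total degree equal to $\deg\overline F_z=(k+r)r^{\ell-2}$. Comparing this with the number of geometric points found in the first step, if the two numbers agree then every intersection multiplicity is $1$, which forces $\overline F_z$ to be smooth at each such point and the intersection with $\{y_i=0\}$ to be transversal — exactly the assertion of Lemma~\ref{lem:3.3}. So the whole lemma reduces to the bookkeeping identity ``number of points in $\overline F_z\cap\{y_i=0\}$'' $=(k+r)r^{\ell-2}$.

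The main obstacle, then, is this point count, and in particular getting the multiplicities in the degenerate-looking configurations right. When $y_i=0$ with $i$ in the range $k+1\le i\le\ell$, the left-hand side of~(\ref{eq:6.1}) need not vanish automatically (unlike the situation at infinity, where some factor was forced to vanish), so the analysis is cleaner: one simply solves for $u$ from $y_1\cdots y_k\,(y_1^r-y_\ell^r)=z_1 y_0^{k+r}$ after substituting $y_a^r=u(z_a-z_i)$, obtaining a polynomial equation in $u$ of the expected degree, and checks that its roots are simple and give $u\neq 0$ (so that the resulting $y_a$ are not all forced into further degeneracy). I expect the count to come out to precisely $(k+r)r^{\ell-2}$ on the nose, the factor $r^{\ell-2}$ coming from the choices of $r\th$ roots for $\ell-1$ of the coordinates modulo a global $r\th$ root of unity, and the factor $k+r$ from the degree in $u$. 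I would also note, as in the closing remark of Lemma~\ref{lem:3.2}, that this simultaneously reconfirms $\overline F_z$ is reduced near $\{y_i=0\}$, which is what is needed downstream to conclude $F_z$ itself is a smooth affine curve.
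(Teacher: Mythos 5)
Your proposal takes essentially the same route as the paper: count the points of $\overline F_z\cap\{y_i=0\}$ set-theoretically, find exactly $(k+r)r^{\ell-2}$ of them, and conclude by the B\'ezout argument of Lemma \ref{lem:3.2} that every intersection multiplicity is one, hence smoothness and transversality; the paper streamlines the count by reducing via symmetry to $i=\ell$ and normalizing the projective scale so that the proportionality factor is $1$ (fixing $y_{\ell-1}$ to a chosen $r$th root of $z_{\ell-1}$), which immediately gives $r^{\ell-2}$ root choices and $k+r$ choices of $y_0$. The one point you leave hanging --- whether $y_0$ can vanish --- resolves at once: the factor $u$ is nonzero (otherwise all $y_a$, $a\neq i$, and then $y_0$ would vanish), so every $y_a$ with $a\neq i$ is nonzero, the left-hand side of (\ref{eq:6.1}) is nonzero, and hence $y_0\neq 0$, so no case split is needed.
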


\begin{proof}
It suffices to show that the number of intersection points is $(k+r)r^{\ell -2}$. As the hyperplanes $y_i=0\ (k + 1 \leqslant i \leqslant \ell)$ play symmetric roles, it suffices to consider the case of the hyperplane $y_{\ell}=0$. The equations (\ref{eq:7}) tell us that $(y^r_1, \ldots,y^r_{\ell-1})$ is proportional to $(z_1,\ldots,z_{\ell-1})$, while by (\ref{eq:6.1}) they cannot be all zero, as otherwise $y_0$ would be zero too. If we fix the indeterminacy "multiplication by a common constant" by requiring $y_{\ell-1}$ to be a specified root of $z_{\ell-1}$, (\ref{eq:7}) tells that each $y_i\ (1\leqslant i \leqslant \ell-2)$ is an $r\th$ root of $z_i$. This gives $r^{\ell-2}$ possibilities, while (\ref{eq:6.1}) leaves $(k+r)$ possibilities for $y_0$.
\end{proof}

\begin{lemma}
\label{lem:3.4}
The curve $\overline{F}_z$ is smooth.
\end{lemma}

\begin{proof}
By Lemma \ref{lem:3.2}, it suffices to show that $F_z$ is smooth. By (\ref{eq:6}), $F_z$ does not intersect the hyperplanes $y_i=0$ for $1\leq i \leq k$. By Lemma \ref{lem:3.3}, it hence suffices to check that in the open set where none of the $y_i$ vanishes, $F_z$ is smooth. Locally on $(\BBC^*)^\ell$, we can take as local coordinates the $Y_i=y^r_i$. In these local coordinates, (\ref{eq:7}) tells us that $F_z$ is on the surface $Y_i=az_i+b$ (coordinates $a,b$). The last equation (\ref{eq:6}) becomes $$\prod_{i=1}^k (az_i+b)^{\frac{1}{r}} \cdot az_1=z_1$$ for some branches of the $r\th$ roots. It follows that $a\ne 0$, and that $F_z$ is contained in the curve of the plane $(a,b)$ $$a^r\prod_{i=1}^k (az_i+b)=1.$$ 

One concludes by invoking the well known 
\begin{lemma}
\label{lem:3.5}
If $F(a_1,\ldots,a_n)$ is a homogeneous polynomial of degree $d\geqslant 1$, the hypersurface $F(a_1,\ldots,a_n)=1$ is non singular.
\end{lemma}

By homogeneity, the hypersurfaces $F=c$ are, for $c\ne 0$, all isomorphic. By Sard's theorem, almost all are non singular. One could rather use the Jacobian criterion : at a point where $F=1$, Euler's identity $$\Sigma\ a_i \partial_i F\ = dF\ =d$$ shows that not all $\partial_iF$ can vanish. 

If we now let $z$ vary in $X(\mathrm{A}_{\ell-1})$, we obtain a family of smooth complete intersection curves in the projective space $\mathbb{P}^{\ell}(\BBC)$ completing $\BBC^{\ell}$, transversal to the hyperplane at infinity. The total space is contained in $\mathbb{P}^{\ell}(\BBC) \times X(\mathrm{A}_{\ell-1})$, and one applies Lemma \ref{lem:2.6} to it.
\end{proof}

\section{Complements}
\subsection{} The projective completion $\overline{F}_z$ of the fiber $F_z$ of the fiber bundle $f\ \colon\ X(\CA) \longrightarrow X(\mathrm{A}_{\ell-1})$ is a smooth complete intersection in $\mathbb{P}^{\ell}(\BBC)$, of multidegree $(k+r,r,\ldots,r)$. 
As $\overline{F}_z$ is defined by $(\ell-2)$ homogeneous equations, its complement $U$ is the union of $(\ell-2)$ smooth open affine varieties of dimension $\ell$, so that $H^1_c(U) =0$ and 
$H^0(\mathbb{P}^{\ell}(\BBC)) \stackrel{\sim}{\to} H^0(\overline{F}_z)$: so $\overline{F}_z$ is connected. To see this, one could rather make an iterated application to the Lefschetz hyperplane theorem, again for $H^0$.  

The canonical line bundle of a complete intersection $Y$ of degrees $(d_1,\ldots,d_{\ell-1})$ in $\mathbb{P}^{\ell}(\BBC)$ 
is isomorphic to the restriction to $Y$ of $\CO(\Sigma\ d_i - \ell - 1)$. In our case, it follows that the degree $2g-2$ of the canonical line bundle of the curve $\overline{F}_z$ is given by 
\[
2g-2 = (k+(r-1)(\ell - 1) - 2)(k+r)r^{\ell-2}.
\] 
The fiber $F_z$ is the complement in $\overline{F}_z$ of $(k+r)r^{\ell-2}$ points. Its fundamental group is hence a free group with $N$ generators, where 
\begin{align*}
N &= 2g\ +  \text{number of removed points}\quad -1\\
&= (k+(r-1)(\ell-1)-1)\ (k+r)r^{\ell-2}\quad + 1.
\end{align*} 
\subsection{}
Each curve $\overline{F}_z$ contains at infinity the point $(0,1,\ldots,1)$. If $M\longrightarrow X(\mathrm{A}_{\ell-1})$ is the total space of the family of the $\overline{F}_z$ (contained in $\mathbb{P}^{\ell}(\BBC)\times X(\mathrm{A}_{\ell-1})$), this common point gives us a section $s$ of the fiber bundle $M\longrightarrow X(\mathrm{A}_{\ell-1})$. The vertical tangent bundle, restricted to this section, is a trivial line bundle, because any line bundle on $X(\mathrm{A}_{\ell-1})$ is trivial. Let $v$ be, along $s$, a nowhere vanishing section of the vertical tangent bundle. Pushing $s$ in the direction of $v$, one obtains a $C^{\infty}$ section of $X(\CA)\longrightarrow X(\mathrm{A}_{\ell-1})$. The fundamental group of $X(\CA)$ is hence a semi-direct product of the fundamental group of the basis by the fundamental group of the fiber : a semi-direct product of the braid group on $\ell$ strands by the free group on $N$ generators.
\bigskip


\bigskip {\bf Acknowledgments}: 
The first and last authors acknowledge 
support from the DFG-priority program 
SPP1489 ``Algorithmic and Experimental Methods in
Algebra, Geometry, and Number Theory''.

\bibliographystyle{amsalpha}

\newcommand{\etalchar}[1]{$^{#1}$}
\providecommand{\bysame}{\leavevmode\hbox to3em{\hrulefill}\thinspace}
\providecommand{\MR}{\relax\ifhmode\unskip\space\fi MR }
\providecommand{\MRhref}[2]{%
  \href{http://www.ams.org/mathscinet-getitem?mr=#1}{#2} }
\providecommand{\href}[2]{#2}

\end{document}